\documentclass[12pt]{article}
\usepackage{amsmath,amssymb,amsfonts,amsthm}
%\thispagestyle{empty}
%\pagestyle{empty}
%\pagenumbering{}
\setlength{\parindent}{18pt}
\textwidth14cm
\textheight21.6cm
\newenvironment{myabstract}{\par\noindent
{\bf Abstract . } \small }
{\par\vskip8pt minus3pt\rm}
\newcounter{item}[section]
\newcounter{kirshr}
\newcounter{kirsha}
\newcounter{kirshb}
\newenvironment{enumroman}{\setcounter{kirshr}{1}
\begin{list}{(\roman{kirshr})}{\usecounter{kirshr}} }{\end{list}}
\newenvironment{enumarab}{\setcounter{kirshb}{1}
\begin{list}{(\arabic{kirshb})}{\usecounter{kirshb}} }{\end{list}}
\newenvironment{athm}[1]{\vskip3mm\par\noindent%\stepcounter{item}
{\bf #1 }. \slshape }
{\upshape\par\vskip10pt minus3pt}
\newtheorem{theorem}{Theorem}[section]

\newtheorem{lemma}[theorem]{Lemma}

\theoremstyle{definition}

\newtheorem{example}[theorem]{Example}
\newtheorem{definition}[theorem]{Definition}

\def\C{{\mathfrak{C}}}
\def\Fm{{\mathfrak{Fm}}}

\def\At{{\bf At}}

\def\Fr{{\mathfrak{Fr}}}
\def\Sg{{\mathfrak{Sg}}}
\def\Fm{{\mathfrak{Fm}}}
\def\A{{\mathfrak{A}}}
\def\B{{\mathfrak{B}}}
\def\C{{\mathfrak{C}}}

\def\M{{\mathfrak{M}}}
\def\N{{\mathfrak{N}}}

\def\CA{{\bf CA}}

\def\SC{{\bf SC}}

\def\RSC{{\bf RSC}}

\def\RCA{{\bf RCA}}

\def\Rd{{\ Rd}}
\def\(R)RA{{\bf (R)RA}}
\def\RA{{\bf RA}}

\def\Ax{{\sf Ax}}
\def\tr{{\sf tr}}

 \def\CA{{\sf CA}}
\def\B{{\sf B}}

\def\Ra{{\mathfrak{Ra}}}

\def\Ra{{\mathfrak{Ra}}}

\def\A{{\mathfrak{A}}}
\def\B{{\mathfrak{B}}}
\def\C{{\mathfrak{C}}}

\def\A{{\mathfrak{A}}}
\def\B{{\mathfrak{B}}}
\def\C{{\mathfrak{C}}}

\def\L{{\mathfrak{L}}}
\def\Rd{{\mathfrak{Rd}}}

\def\At{{\mathfrak{At}}}
\def\L{{\mathfrak{L}}}
\def\Bl{{\mathfrak{Bl}}}
\def\CA{{\bf CA}}
\def\RA{{\bf RA}}

\def\RCA{{\bf RCA}}

\def\At{{\sf{At}}}
\def\N{\mathbb{N}}

\def\Cs{{\bf Cs}}

\def\Ra{{\mathfrak{Ra}}}

\def\CA{{\bf CA}}
\def\RCA{{\bf RCA}}

\def\Sg{{\mathfrak Sg}}

\def\Rl{{\mathfrak Rl}}
\def\N{{\cal N}}

\def\At{{\sf At}}
\def\Ig{{\sf Ig}}

\def\QRA{{\sf QRA}}

 %\b means bold, I think

%\newenvironment{proof}{
%PROOF:
%\begin{quotation}}{
%$\Box$ \end{quotation}}

%not to be confused with ``bold''

\title{Free algebras in Boolean algebras with operators}
\author{Tarek Sayed Ahmed \\
Department of Mathematics, Faculty of Science,\\ 
Cairo University, Giza, Egypt.
  }
%%% ----------------------------------------------------------------------
%
\begin{document}
\maketitle

\begin{myabstract} 
We study atomicity of free algebras in varieties of Boolean algebras with operations, 
and we give some applications to cylindric-like algebras, mostly simplifying existing proofs in the literature due to N\'emeti, Tarski and Givant.
We obtain a new result concerning Pinters algebra, namely that the free $n$ 
dimensional representable algebra with $m$ free generators, 
can be generated but not freely with a (redundant)  set of $m$ elements.
\end{myabstract}

%We investigate two algebraic properties of free algebras in varieties of boolean algebras with operators. Atomicity and interpolation.

Cylindric and relation algebras were introduced by Tarski to algebraize first order logic. 
The structures of free cylindric and relation algebras are quite rich 
since they are able to capture the whole of first order logic, in a sense.
One of the first things to investigate about these free algebras is whether they 
are atomic or not, i.e. whether their boolean reduct is atomic or not.
By an atomic boolean algebra we mean an algebra for which  
below every non-zero element there is an atom, i.e. a minimal non-zero 
element. Throughout $n$ will denote a countable cardinal (i.e. $n\leq \omega$). 
More often than not, $n$ will be 
finite.
$\CA_n$ stands for the class of cylindric algebras of dimension $n$.
For a class $K$ of algebras, and a cardinal $\beta>0$, 
$\Fr_{\beta}K$ stands for the $\beta$-generated free
$K$ algebra.
In particular, $\Fr_{\beta}\CA_n$ 
denotes the $\beta$-generated free cylindric algebra
of dimension 
$n$. The following is known:
If $\beta\geq \omega$, then $\Fr_{\beta}\CA_n$ is atomless (has no atoms)
 [Pigozzi \cite{HMT1} 2.5.13].
Assume that $0<\beta<\omega$. If $n <2$ then $\Fr_{\beta}\CA_n$ is finite,
hence atomic, 
\cite{HMT1} 2.5.3(i).
$\Fr_{\beta}\CA_2$ is infinite but still atomic [Henkin, \cite{HMT1} 2.5.3(ii), 2.5.7(ii).]
If $3\leq n<\omega$, then $\Fr_{\beta}\CA_n$ has infinitely many atoms
[Tarski, \cite{HMT1} 2.5.9], and it was posed as an open question, cf \cite{HMT1} 
problem 4.14, whether it is atomic or not.
Here we prove, as a partial solution of problem 4.14 in \cite{HMT1}, and among other things, 
that $\Fr_{\beta}\CA_n$ is not atomic for $\omega>\beta>0$ and $\omega>n\geq 4.$ 
Here we investigate atomicity or non atomicity of free algebras in (often discriminator varieties) of Boolean algebras with 
operators.

%\section{Atomicity of free algebras}

\section{Free algebras in a broad context}

In cylindric algebra theory, whether the free algebras are atomic or not is an important topic. In fact, N\'emeti proves
that for $n\geq 3$ the free algebras of dimension $n$ on a finite set of generators are not atomic,
and this is closely related to Godels incompleteness theorems for the finite $n$-variable fragments of first order logic.
We first start by proving slightly new results concerning free algebras of classes of $BAO$'s.

\begin{definition}
Let $K$ be variety  of $BAO$'s. Let $\L$ be the corresponding multimodal logic.
We say that $\L$ has the {\it Godel's incompleteness property} if there exists
a formula $\phi$ that cannot be extended to a recursive complete theory.
Such formula is called incompletable.
\end{definition}
Let $\L$ be a general modal logic, and let $\Fm_{\equiv}$ be the Tarski-Lindenbaum formula algebra on
finitely many generators.
\begin{theorem}(Essentially Nemeti's) If $\L$ has $G.I$, then the algebra $\Fm_{\equiv}$
is not atomic.
\end{theorem}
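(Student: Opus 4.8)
The plan is to exploit the standard dictionary between the Boolean structure of the Lindenbaum--Tarski algebra $\Fm_{\equiv}$ and the syntactic notion of a complete theory, and then to add the recursion-theoretic layer that turns an atom into a \emph{recursive} complete theory. Recall that a class $[\phi]$ is nonzero in $\Fm_{\equiv}$ exactly when $\phi$ is $\L$-consistent, and that $[\phi]\leq[\psi]$ means $\vdash_{\L}\phi\to\psi$. Hence $[\psi]$ is an atom precisely when $\psi$ is consistent and \emph{complete}, meaning that for every formula $\chi$ one has $\vdash_{\L}\psi\to\chi$ or $\vdash_{\L}\psi\to\neg\chi$; indeed the meet $[\psi]\cdot[\chi]$ must equal either $[\psi]$ or $0$. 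First I would make this correspondence precise by assigning to each atom $[\psi]$ the theory $T_{\psi}=\{\chi:\vdash_{\L}\psi\to\chi\}$ and checking that $T_{\psi}$ is a consistent complete theory containing $\psi$.

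The crux is the observation that such a $T_{\psi}$ is not merely complete but \emph{recursive}. Since $\L$ is a recursively axiomatized multimodal logic over finitely many generators, its formulas admit a G\"odel numbering and the derivability relation $\vdash_{\L}$ is recursively enumerable; consequently $T_{\psi}=\{\chi:\vdash_{\L}\psi\to\chi\}$ is r.e. The place where completeness of $\psi$ is essential is in showing that the \emph{complement} of $T_{\psi}$ is r.e.\ as well: by completeness $\chi\notin T_{\psi}$ holds iff $\vdash_{\L}\psi\to\neg\chi$, so the complement is enumerated by running the same semidecision procedure on negations. Both $T_{\psi}$ and its complement being r.e., the theory $T_{\psi}$ is recursive. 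I expect this to be the main obstacle, since it is exactly here that the two ingredients --- atomicity, which supplies a single deciding formula $\psi$, and effectiveness of $\L$, which supplies r.e.\ provability --- must be combined; dropping either one destroys recursiveness.

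With this lemma in hand the theorem follows by contraposition. Suppose, towards a contradiction, that $\Fm_{\equiv}$ is atomic, and let $\phi$ be an incompletable formula furnished by the G\"odel incompleteness property; we may take $\phi$ consistent, so that $[\phi]\neq 0$. By atomicity there is an atom $[\psi]\leq[\phi]$, whence $\vdash_{\L}\psi\to\phi$ and so $\phi\in T_{\psi}$. The previous paragraph shows that $T_{\psi}$ is a recursive complete theory, and it contains $\phi$, contradicting the incompletability of $\phi$. Therefore $\Fm_{\equiv}$ is not atomic, as claimed.
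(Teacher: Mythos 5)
Your proposal is correct and takes essentially the same approach as the paper: from an atom $[\psi]$ below the incompletable $[\phi]$ you extract the complete theory $T_{\psi}=\{\chi:\vdash_{\L}\psi\to\chi\}$ and show it is recursive because both it and (by completeness, with the exclusive-or observation) its complement are recursively enumerable, contradicting incompletability. The only cosmetic differences are that you argue by contraposition from atomicity where the paper directly shows no atom lies below $[\phi]$, and your description of enumerating the complement via $\vdash_{\L}\psi\to\neg\chi$ is in fact slightly cleaner than the paper's phrasing.
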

\begin{proof}
Assume that $\L$ has $G.I$. Let $\phi$ be an incompletable
formula. We show that there is no atom in the Boolean algebra $\Fm$
below $\phi/\equiv.$
Note that because $\phi$ is consistent, it follows that $\phi/\equiv$ is non-zero.
Now, assume to
the contrary that there is such an atom $\tau/\equiv$ for some
formula $\tau.$
This means that .
that $(\tau\land \bar{\phi})/\equiv=\tau/\equiv$.
Then it follows that
$\vdash (\tau\land \phi)\implies \phi$, i.e.
$\vdash\tau\implies \phi$.
Let
$T=\{\tau,\phi\}$
and let
$Consq(T)=\{\psi\in Fm: T\vdash \psi\}.$
$Consq(T)$ is short for the consequences of $T$.
We show that $T$ is complete and that $Consq(T)$ is
decidable.   Let $\psi$ be an arbitrary formula in $\Fm.$
Then either $\tau/\equiv\leq \psi/\equiv$ or $\tau/\equiv\leq \neg \psi/\equiv$
because $\tau/\equiv$ is an
atom. Thus $T\vdash\psi$ or $T\vdash \neg \psi.$
Here it is the {\it exclusive or} i.e. the two cases cannot occur together.
Clearly $ConsqT$ is recursively enumerable.  By completeness of $T$ we have
$\Fm_{\equiv}\smallsetminus Consq(T)=\{\neg \psi: \psi\in Consq(T)\},$
hence the complement of $ConsqT$ is recursively enumerable as well, hence $T$
is decidable.  Here we are using the trivial fact that $\Fm$ is decidable.
This contradiction proves that $\Fm_{\equiv}$ is not atomic.
\end{proof}
%\begin{definition} An element $a\in A$ is closed, if $f_i(a)=a$ for every $i\in I$
%\end{definition}
%\begin{definition} An algebra $\A$ is hereditary atomic, if every subalgebra is atomic.
%\end{definition}
In the following theorem, we give a unified perspective 
on several classes of algebras, studied in algebraic logic. Such algebras are cousins of cylindric algebras; though
the differences, in many cases, can be subtle and big.

(1) holds for diagonal free cylindric algebras, cylindric algebras, Pinter's substitution algebras
(which are replacement algebras endowed with cylindrifiers) and quasipolydic algebras
with and without equality when the dimension is $\leq 2$. (2) holds for Boolean algebras; we do not know whether it extends any further.  
(3) holds for such algebras for all finite dimensions. 
%(4) is due to Jonsson and Tarski.

In fact, (1) holds for any discriminator variety $V$ of $BAO$'s, with finitely many operators, 
when $V$ is generated by the discriminator class $SirK$, of subdirectly indecomposable algebras having a discriminator term. To prove the latter, 
we start by a (well-known) lemma:

\begin{lemma} Let $L\supseteq L_{BA}$ be a functional signature, and $V$ a variety of $L-BAO$'s. Let $d(x)$ be a unary $L$ term. 
Then the following are equivalent:
\begin{enumarab}
\item $d$ is a discriminator term of $SirV$, so that $V$ is a discriminator variety.
\item all equations of the following for are valid in $V$:
\begin{enumerate}
\item $x\leq d(x)$

\item $d(d(x))\leq d(x)$
%\item $d(-d(x)\leq -d(x)$
\item $f(x)\leq d(x)$ for all $f\in L\sim L_{BA}$
\end{enumerate}
\end{enumarab}
\end{lemma}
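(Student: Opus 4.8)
The plan is to route everything through the standard correspondence between congruences of an $L$-BAO and its \emph{operator-closed ideals}. Since each $f\in L\setminus L_{BA}$ is a normal additive operator, for any $\A\in V$ the congruences of $\A$ are in bijection with the Boolean ideals $I$ satisfying $f[I]\subseteq I$ for every operator $f$, the congruence attached to $I$ being $x\equiv y\iff x\triangle y\in I$ (where $x\triangle y=(x\wedge\neg y)\vee(\neg x\wedge y)$). Under this bijection $\A$ is subdirectly irreducible exactly when its operator-closed ideals have a least nonzero member, the monolith. First I would record this, together with the elementary fact that $V=\mathbf{S}\mathbf{P}(\mathrm{Sir}\,V)$, so that an equation holds throughout $V$ iff it holds on every subdirectly irreducible member.

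For the implication $(1)\Leftarrow(2)$ the key observation is that (a) and (b) together say $d$ is inflationary and idempotent, $d(d(x))=d(x)$, and that (c) lets one show that for every $a$ the principal ideal $(d(a)]=\{x:x\leq d(a)\}$ is operator-closed: if $x\leq d(a)$ then additivity (hence monotonicity) of $f$ gives $f(x)\leq f(d(a))$, while (c) applied at $d(a)$ together with idempotence gives $f(d(a))\leq d(d(a))=d(a)$, so $f(x)\leq d(a)$. By (a) we have $a\in(d(a)]$, and a short check shows $(d(a)]$ is in fact the least operator-closed ideal containing $a$; thus the principal congruences are exactly those cut out by the closed elements $d(a)$.

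Now fix $\A\in\mathrm{Sir}\,V$ with monolith $I_0$. For $a\neq 0$ the ideal $(d(a)]$ is a nonzero operator-closed ideal, hence contains $I_0$. To finish I would argue that the closed element $e=d(a)$ determines a direct factorization of $\A$ — equivalently that the Boolean complement $(\neg e]$ is again operator-closed — so that subdirect irreducibility forces $e\in\{0,1\}$; since $a\leq d(a)$ and $a\neq0$ this yields $d(a)=1$, while $d(0)=0$ comes from normality. Hence $d$ is the unary discriminator on every subdirectly irreducible member, and the ternary term $t(x,y,z)=(x\wedge d(x\triangle y))\vee(z\wedge\neg d(x\triangle y))$ evaluates to $z$ when $x=y$ and to $x$ when $x\neq y$; so $t$ is a discriminator term on $\mathrm{Sir}\,V$ and $V$ is a discriminator variety. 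The converse $(1)\Rightarrow(2)$ is then a routine case check: on a subdirectly irreducible algebra $d(0)=0$, $d(a)=1$ for $a\neq0$, and normality make each of (a),(b),(c) evident, and these equations propagate to all of $V$ because $V=\mathbf{S}\mathbf{P}(\mathrm{Sir}\,V)$.

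The hard part will be precisely the factorization step in the previous paragraph: passing from subdirect irreducibility to the two-valuedness $d(a)\in\{0,1\}$ is what separates a genuine discriminator term from a merely inflationary, idempotent, operator-dominating term, and it is here that (a)--(c) must be used in full (together with normality) to guarantee that operator-closed ideals are complemented, so that subdirectly irreducible members are in fact simple. Everything else — the ideal/congruence dictionary, the closure computation for $(d(a)]$, and the verification that $t$ is the discriminator — is bookkeeping.
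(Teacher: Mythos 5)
The paper itself offers no proof of this lemma (it is asserted as ``well-known''), so there is nothing to compare your argument against line by line; judged on its own terms, your proposal has the right architecture for the direction $(1)\Rightarrow(2)$ and for the reduction of $(2)\Rightarrow(1)$ to a factorization claim, but that factorization claim --- the step you yourself flag as ``the hard part'' --- is not merely hard, it is unprovable from (a)--(c) as printed, because the implication $(2)\Rightarrow(1)$ is false as stated. Degenerate counterexample: the term $d(x)=1$ satisfies (a), (b), (c) in every variety of $BAO$'s, but is never a unary discriminator on a nontrivial algebra (and not every $BAO$ variety is a discriminator variety). More robustly, even if one repairs this by adding $d(0)=0$, take $L=L_{BA}\cup\{f\}$ with $f$ unary normal additive, let $V$ be the variety of modal algebras satisfying $f(f(x))\leq f(x)$, and put $d(x)=x+f(x)$: then (a), (c) are immediate, (b) follows from additivity and $ff(x)\leq f(x)$, and $d(0)=0$ by normality of $f$. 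Yet $V$ contains the complex algebra of the two-element frame with worlds $u,v$ and sole edge from $u$ to $v$; its congruence ideals form a three-element chain, so it is subdirectly irreducible but not simple, and $d(\{u\})=\{u\}\cup f(\{u\})=\{u\}\notin\{0,1\}$ although $\{u\}\neq 0$. So no argument can extract two-valuedness of $d$ on $SirV$ from (a)--(c): these conditions make $(d(a)]$ operator-closed, as you correctly compute, but they say nothing about $(-d(a)]$.

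The missing ingredient is the complement-closure law $d(-d(x))\leq -d(x)$ (almost certainly what the garbled printed condition was meant to include --- note the paper's ``equations of the following for''). With it your plan goes through verbatim and easily: $f(d(a))\leq d(d(a))\leq d(a)$ by (b),(c), and $f(-d(a))\leq d(-d(a))\leq -d(a)$ by (c) and the new law, so $(d(a)]$ and $(-d(a)]$ are complementary congruence ideals, $\A\cong \Rl_{d(a)}\A\times\Rl_{-d(a)}\A$, and direct indecomposability of subdirectly irreducibles forces $d(a)\in\{0,1\}$; since $a\leq d(a)$, $a\neq 0$ gives $d(a)=1$, and then $d(0)=d(-d(a))\leq -d(a)=0$. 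Note this last point also repairs a second flaw in your write-up: you claim $d(0)=0$ ``comes from normality,'' but $d$ is an arbitrary term, possibly involving complementation (again $d(x)=1$ witnesses this), so normality of the operators $f$ gives nothing about $d$. The same gap infects your side remark that $(d(a)]$ is the \emph{least} operator-closed ideal containing $a$: that requires $V\models d(0)=0$ (so that congruence ideals are closed under $d$), and is false for $d(x)=1$. Your construction of the ternary discriminator $t(x,y,z)$ from a two-valued $d$, and the direction $(1)\Rightarrow(2)$ via $V$ being generated by its subdirectly irreducible members, are both correct.
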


\begin{theorem} \label{free}Let $K$ be a variety of Boolean algebras with finitely many operators.
\begin{enumarab}
\item Assume that  $K=V(Fin(K))$, and for any $\B\in K$ and $b'\in \B$, there exists a regular $b\in \B$ such that
$\Ig^{\B}\{b'\}=\Ig^{\Bl\B}\{b\}$. If $\A$ is finitely generated, then $\A$ is atomic, hence 
the finitely generated free algebras are atomic. In particular, if $K$ is a discriminator variety, with discriminator term 
$d$, then finitely generated algebras are 
atomic. (One takes $b'=d(b)$).

\item Assume That $V$ is a $BAO$ and that the condition above on principal ideals, together with the condition that
that if $b_1'$ and $b_2$'s are the generators of two given ideals happen to be a partition (of the unit), 
then $b_0, b_1$ can be chosen to be also a partition. Then
$\Fr_{\beta}K_{\alpha}\times \Fr_{\beta}K_{\alpha}\cong \Fr_{|\beta+1|}K.$ In particular if $\beta$ is infinite, and
$\A=\Fr_{\beta}K$, then $\A\times \A\cong \A$. 
\item Assume that $\beta<\omega$, and assume the above condition on principal ideals.
Suppose further that for every $k\in \omega$, there exists an algebra $\A\in K$, with at least $k$ atoms, 
that is generated by a single element. Then $\Fr_{\beta}K$ has infinitely many atoms.
\item  Assume that $K=V(Fin(K))$.
Suppose $\A$ is $K$ freely generated by a finite set $X$ and $\A=\Sg Y$ with $|Y|=|X|$. Then $\A$ is $K$ freely generated
by $Y.$
\end{enumarab}
\end{theorem}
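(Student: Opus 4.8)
The plan is to isolate a single structural device and then specialize it to each of the four parts. The device is this: the hypothesis on principal ideals says that every $BAO$-ideal $\Ig^{\B}\set{b'}$ is in fact a principal Boolean ideal $\{x:x\le b\}$ for some regular $b$; since a regular element is central, it induces an internal direct decomposition $\B\cong\Rl_b\B\times\Rl_{-b}\B$ in which the second factor is isomorphic to the quotient $\B/\Ig^{\B}\set{b'}$. In the discriminator case I will take $b=d(b')$: the Lemma gives $x\le d(x)$ and $d(d(x))\le d(x)$, so $d(b')$ is regular and $\Ig^{\B}\set{b'}=\{x:x\le d(b')\}$. The second, separate ingredient, needed for (1), (3) and (4), is that $K=V(Fin(K))$ forces every finitely generated $\A\in K$ to be residually finite, i.e.\ every $0\ne a\in\A$ is separated from $0$ by a homomorphism onto a finite member of $K$.

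For (1), given $0\ne a$ in a finitely generated $\A$, I will first pick a homomorphism $h:\A\to\C$ onto a finite $\C\in K$ with $h(a)\ne 0$. The kernel has finite index in the finitely generated $\A$, hence is a finitely generated ideal; as $BAO$-ideals are closed under $+$, it is principal, say $\ker h=\Ig^{\A}\set{b'}$. The ideal hypothesis rewrites it as $\{x:x\le b\}$ with $b$ regular and central, so $\A\cong\Rl_b\A\times\Rl_{-b}\A$ with the finite factor $\Rl_{-b}\A\cong\C$. Since $h(a)\ne 0$, the $-b$-component $a\cdot(-b)$ is nonzero, and the finite factor, being atomic, has an atom below it; an atom of a direct factor is an atom of $\A$, so we obtain an atom below $a$. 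Thus $\A$ is atomic, and the discriminator case follows by the choice $b=d(b')$ already described.

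Parts (4) and (3) run on the same residual-finiteness input. For (4), freeness of $X$ lets me extend a fixed bijection $X\to Y$ to an endomorphism $s:\A\to\A$, which is surjective because $Y$ generates $\A$. I will then invoke that a finitely generated residually finite algebra is Hopfian: the set of homomorphisms from $\A$ into any fixed finite algebra $F$ is finite, and $\phi\mapsto\phi\circ s$ is injective on it (as $s$ is onto), hence bijective, so every homomorphism $\A\to F$ factors through $s$; but a homomorphism separating two points identified by $s$ cannot so factor. Hence $s$ is an automorphism carrying the free set $X$ onto $Y$, so $Y$ is free. For (3), for each $k$ I take the singly generated $\C\in K$ with at least $k$ atoms (finite, as in the intended applications), map one free generator of the finitely generated $\Fr_{\beta}K$ onto the generator of $\C$ and the rest to $0$ to get a surjection onto $\C$, and split off the finite factor exactly as in (1): $\Fr_{\beta}K\cong\Rl_b\Fr_{\beta}K\times\C$. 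The $\ge k$ atoms of $\C$ are then atoms of $\Fr_{\beta}K$; as $k$ is arbitrary, $\Fr_{\beta}K$ has infinitely many atoms.

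For (2) the same central element does the splitting, but now I use a generator $g$ of $\D:=\Fr_{|\beta+1|}K$ (free on $X\cup\set{g}$, $|X|=\beta$) to produce two complementary factors. Applying the ideal hypothesis to the pair $\Ig^{\D}\set{g}$, $\Ig^{\D}\set{-g}$, whose generators $g,-g$ partition the unit, the supplementary partition clause returns regular generators $b$ and $-b$, so that $\D\cong\Rl_b\D\times\Rl_{-b}\D$ is a genuine two-block internal direct product. Each factor is easily identified: $\Rl_{-b}\D\cong\D/\Ig^{\D}\set{g}$ is the quotient of the free algebra obtained by sending $g$ to $0$, hence is free on $X$, and symmetrically $\Rl_b\D\cong\D/\Ig^{\D}\set{-g}$ (sending $g$ to $1$) is free on $X$ as well. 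Thus $\Fr_{\beta}K\times\Fr_{\beta}K\cong\Fr_{|\beta+1|}K$, and for infinite $\beta$, $|\beta+1|=\beta$ gives $\A\times\A\cong\A$. I expect the main obstacle to be not the freeness of the factors, which is the routine ``kill a generator'' computation, but ensuring the two quotients assemble into an honest direct product; this is exactly what the extra partition hypothesis secures, by forcing the regular generators of the two complementary ideals to be themselves complementary.
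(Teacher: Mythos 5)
Your proposal is correct and follows essentially the same route as the paper's proof in all four parts: a separating homomorphism onto a finite algebra whose finite-index kernel is a finitely generated, hence principal, ideal that the hypothesis converts into a Boolean principal ideal, with atoms then located in the Boolean factor $\Rl_{-b}\A$ for (1) and (3); the splitting on the last free generator via the partition clause, with each factor identified as free on $\beta$ generators, for (2); and the counting of homomorphisms into finite algebras for (4), where your Hopfian argument is the paper's injection-between-equinumerous-finite-sets count repackaged. One caution: your opening claim that a regular element is automatically central, yielding a $BAO$ direct decomposition $\A\cong\Rl_{b}\A\times\Rl_{-b}\A$, is unwarranted in general (the complementary Boolean ideal $\{x: x\leq -b\}$ need not be closed under the operators --- precisely the failure that the partition hypothesis of part (2) is designed to repair, as you yourself observe there), but since in (1) and (3) you use only the Boolean structure of the factor to find atoms --- exactly as the paper does via its Boolean isomorphism $\A/\Ig^{\Bl\A}\{b\}\cong \Rl_{-b}\A$ --- nothing in your argument breaks.
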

\begin{proof}
\begin{enumarab}
\item Assume that $a\in A$ is non-zero. Let $h:\A\to \B$ be a homomorphism of $\A$ into a finite algebra $\B$ such that
$h(a)\neq 0$. Let $I=ker h.$ We claim that $I$ is a finitely generated ideal.
Let $R_I$ be the congruence relation corresponding to $I$, that is $R_I=\{(a,b)\in A\times A: h(a)=h(b)\}$.

Let $X$ be a finite set such that $X$ generates $\A$ and $h(X)=\B$. Such a set obviously exists.
Let $X'=X\cup \{x+y: x, y\in X\}\cup \{-x: x\in X\}\cup \bigcup_{f\in t}\{f(x): x\in X\}.$
Let $R=\Sg^{\A}(R_I\cap X\times X')$. Clearly $R$ is a finitely generated congruence and $R_I\subseteq R$.
We show that the converse inclusion also holds.

For this purpose we first show that $R(X)=\{a\in A: \exists x\in X (x,a)\in R\}=\A.$
Assume that $xRa$ and $yRb$, $x,y\in X$ then $x+yRa+b$, but there exists $z\in X$ such that $h(z)=h(x+y)$ and $zR(x+y)$, hence
$zR(a+b)$ , so that $a+b\in R(X)$. Similarly for all other operations. Thus $R(X)=A$.
Now assume that $a,b\in A$ such that $h(a)=h(b)$.
Then there exist $x, y\in X$ such that $xRa$ and $xRb$. Since $R\subseteq ker h$,
we have $h(x)=h(a)=h(b)=h(y)$ and so $xRy$, hence $aRb$ and $R_I\subseteq R$.
So $I=\Ig\{b'\}$ for some element $b'$.  Then there exists $b\in \A$ such that  $\Ig^{\Bl\B}\{b\}=\Ig\{b'\}.$ Since $h(b)=0$ and $h(a)\neq 0,$
we have $a.-b\neq 0$. If $a.-b=0$, then $h(a).-h(b)=0$

Now $h(\A)\cong \A/\Ig^{\Bl\B}\{b\}$ as $K$ algebras. Let $\Rl_{-b}\A=\{x: x\leq -b\}$. Let $f:\A/\Ig^{\Bl\B}\{b\}\to \Rl_{-b}\A$ be defined by
$\bar{x}\mapsto x.-b$. Then $f$ is an isomorphism of Boolean algebras (recall that the operations of $\Rl_{-b}\B$ are defined by
relativizing the Boolean operations to $-b$.)
Indeed, the map is well defined, by noting that if $x\delta y\in \Ig^{\Bl\B}\{b\}$, where $\delta$ denotes symmetric difference,
then $x.-b=y.-b$ because $x, y\leq b$.

Since $\Rl_{-b}\A$ is finite, and $a.-b\in \Rl_{-b}\A$ is non-zero, then there exists an atom $x\in \Rl_{-b}\A$ below $a$,
but clearly $\At(\Rl_{-b}\A)\subseteq \At\A$ and we are done.
%(if $d$ is the discriminator term for $V$, then one takes $b'=d(b)$.)

\item Let $(g_i:i\in \beta+1)$ be the free generators of $\A=\Fr_{\beta+1}K$.
We first show that $\Rl_{g_{\beta}}\A$ is freely generated by
$\{g_i.g_{\beta}:i<\beta\}$. Let $\B$ be in $K$ and $y\in {}^{\beta}\B$.
Then there exists a homomorphism $f:\A\to \B$ such that $f(g_i)=y_i$ for all $i<\beta$ and $f(g_{\beta})=1$.
Then $f\upharpoonright \Rl_{g_{\beta}}\A$ is a homomorphism such that $f(g_i.g_{\beta})=y_i$. Similarly
$\Rl_{-g_{\beta}}\A$ is freely generated by $\{g_i.-g_{\beta}:i<\beta\}$.
Let $\B_0=\Rl_{g_{\beta}}\A$ and $\B_1=\Rl_{g_{\beta}}\A$.
Let $t_0=g_{\beta}$ and $t_1=-g_{\beta}$. Let $x_i$ be such that $J_i=\Ig\{t_i\}=\Ig^{Bl\A}\{x_i\}$, and $x_0.x_1=0$.
Exist by assumption. Assume that $z\in J_0\cap J_1$. Then $z\leq x_i$,
for $i=0, 1$, and so  $z=0$. Thus $J_0\cap J_1=\{0\}$. Let $y\in A\times A$, and let $z=(y_0.x_0+y_1.x_1)$, then $y_i.x_i=z.x_i$ for each $i=\{0,1\}$
and so $z\in \bigcap y_0/J_0\cap y_1/J_1$. Thus $\A/J_i\cong \B_i$, and so
$\A\cong \B_0\times \B_1$.

%\end{demo}
\item Let $\A=\Fr_{\beta}K.$ Let $\B$ have $k$ atoms and generated by a single element. Then there exists a surjective
homomorphism $h:\A\to \B$. Then, as in the first item,  $\A/\Ig^{\Bl\B}\{b\}\cong \B$, and so $\Rl_{b}\B$ has $k$ atoms.
Hence $\A$ has $k$ atoms for any $k$ and we are done.

\item Let $\A=\Fr_XK$, let $\B\in Fin(K)$ and let $f:X\to \B$. Then $f$ can extended to a homomorphism $f':\A\to \B$.
Let $\bar{f}=f'\upharpoonright Y$. If $f, g\in {}^XB$ and $\bar{f}=\bar{g}$,
then $f'$ and $g'$ agree on a generating set $Y$, so $f'=g',$ hence $f=g$.
Therefore we obtain a one to one mapping from $^XB$ to $^YB$, but $|X|=|Y|,$
hence this map is surjective. In other words for each $h\in {}^YB,$ there exists a unique
$f\in {}^XB$ such that $\bar{f}=h$, then $f'$ with domain $\A$ extends $h.$
Since $\Fr_XK=\Fr_X(Fin(K))$ we are done.
\end{enumarab}
\end{proof}

\section{Two new results on Pinter's algebras}

\begin{example}

Let $U, n$ be finite, such that each has at least two elements, and $n>2$. Let $\B= \wp({^nU})\in \SC_n$,  
$X=\{s\in {}^nU: s_0<s_1\}$, and $\A=\Sg^{\A}\{X\}$. Define by recursion, $Y_0=^nU$, 
$Y_1=c_0X$ and $Y_{m+1}=C_0(C_1(Y_m\sim X\cap X)$. 
Then it is clear that $Y_m={s: \lambda\leq s_1}$. $|RgY|=|U|+1.$ The $\A$ is finite and 
is simple and generated by a single element. From the above we get that the free algebras have infinitely many atoms.
\end{example}

\begin{theorem}  For every finite $n>2$, and $\beta>0$, there is an irredundant $\beta$ 
element generator set of $\Fr_{\beta}\RCA_n$ which does not generate it freely. The same holds for Pinters substitution algebras.
\end{theorem}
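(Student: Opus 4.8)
The plan is to read the statement as the assertion that $\A=\Fr_{\beta}\RCA_n$ fails to be \emph{Hopfian}, i.e.\ that it carries a surjective-but-not-injective endomorphism. Indeed, if $(g_i:i<\beta)$ are the free generators, then a $\beta$-element set $Y=\{y_i:i<\beta\}$ generates $\A$ exactly when the endomorphism $\sigma\colon\A\to\A$ determined by $g_i\mapsto y_i$ is surjective, and $Y$ generates \emph{freely} exactly when this $\sigma$ is injective. So an irredundant generating $\beta$-set that does not generate freely is the same object as a proper surjective endomorphism of $\A$ whose induced generating set is irredundant, and I would reduce the theorem to producing one.

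For the construction I would exploit the Example preceding the theorem. Its recursion $Y_0,Y_1,Y_2,\dots$ is effected by a single unary term $v$ (the operator sending $Y_m$ to $Y_{m+1}$, built from $c_0,c_1$ and the Boolean operations). Over an infinite base this $v$ acts as a genuine upward shift on the chain $\{Y_m\}$ and hence misses its bottom element, so $v$ is not surjective on $\wp({}^n\mathbb{N})\in\RCA_n$. The feature I need is that this shift possesses a reverse-shift term $w$ with $w(v(x))=x$ valid throughout $\RCA_n$. Granting this, put $Y=\{v(g_0)\}\cup\{g_i:0<i<\beta\}$, a set of exactly $\beta$ elements.

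Verification then splits into three routine parts. Surjectivity of $\sigma$ (equivalently, that $Y$ generates $\A$) follows because $g_0=w(v(g_0))$ by the left-inverse identity while the remaining $g_i$ lie in $Y$, whence $\Sg^{\A}Y=\A$. Non-freeness (non-injectivity of $\sigma$) follows by mapping $\A$ into $\wp({}^n\mathbb{N})$ with $g_0\mapsto X=\{s:s_0<s_1\}$: were $Y$ free, the assignment sending $v(g_0)$ to an element outside the range of $v$ on this algebra would extend to a homomorphism, which is impossible since $v(g_0)$ must be sent into that range. Irredundance follows from free independence: dropping any $g_i$ leaves it unrecoverable, and dropping $v(g_0)$ leaves only $g_1,\dots,g_{\beta-1}$, which cannot recover $g_0$. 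The identical argument runs inside $\SC_n$, where the Example actually lives, giving the Pinter's-substitution-algebra case.

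The hard part is the single identity $w(v(x))=x$: I must produce a shift term whose reverse is valid in \emph{every} representable algebra, not merely in the order algebra used to witness non-surjectivity, for only then does $Y$ generate the free algebra rather than just that one quotient. This is precisely the phenomenon of a unary term that is injective throughout the variety yet non-surjective on some member, which can occur only because $\RCA_n\neq V(Fin(\RCA_n))$ for $3\le n<\omega$; and that inequality is itself guaranteed by the paper's non-atomicity theorem together with Theorem~\ref{free}(1) (were $\RCA_n$ generated by its finite members, its finitely generated free algebras would be atomic). As a cross-check, the infinite-$\beta$ case can bypass the term construction entirely: Theorem~\ref{free}(2) gives $\A\cong\A\times\A$, and composing this isomorphism with a coordinate projection produces a proper surjective endomorphism at once.
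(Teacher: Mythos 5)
Your reduction is sound, and it is in fact the very skeleton of the paper's own proof: the paper replaces $g_0$ by $\tau(g_0)$, where $\tau$ is a unary term with a term left inverse $\sigma$ satisfying $\RCA_n\models\sigma(\tau(x))=x$, while the range of $\tau$ is provably constrained --- the paper witnesses this by a third term $\delta$ with $\RCA_n\models\delta(\tau(x))=1$ but not $\Cs_n\models\delta(x)=1$, which is exactly your ``injective throughout the variety, non-surjective on some member'' phenomenon. Your surjectivity, non-freeness and irredundance verifications are all correct \emph{modulo the existence of such terms}.

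The genuine gap is that you never produce the pair $v,w$: you explicitly defer ``the hard part,'' and neither of the devices you point to supplies it. The recursion in the Example is effected by a term $v(X,y)$ with the generator $X$ as a parameter, and the fact that it moves injectively along the single chain $Y_0,Y_1,\dots$ inside one algebra gives no term $w$ with $w(v(x))=x$ valid in all of $\RCA_n$; a variety-wide equational identity is vastly stronger than injectivity on one orbit in one model, and nothing in the Example even suggests the shift is injective on all of $\wp({}^n\mathbb{N})$. Your appeal to $\RCA_n\neq V(Fin(\RCA_n))$ runs the implication backwards: by Theorem \ref{free}(4) the existence of your $v,w$ \emph{forces} that inequality, but the inequality does not conversely yield the terms --- non-Hopfianness of the free algebra is precisely what is to be proved, and no cited result derives it from $\RCA_n\neq V(Fin(\RCA_n))$. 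The paper's entire proof consists of the construction you skip: N\'emeti's three restricted formulas $\phi,\psi,\eta$ (adjoining, respectively removing, the successor of the greatest fixed point of a discrete order coded by a ternary relation in three variables), converted into $\tau,\sigma,\delta$. A second, independent gap is the Pinter case: ``the identical argument runs inside $\SC_n$'' cannot work as stated, because the fixed-point formulas use equality ($x=y$, the successor formula, etc.), i.e.\ diagonal elements, which substitution algebras lack; the paper needs a separate translation step --- eliminating equality in favour of a membership relation via extensionality, and passing through the Leibniz strong congruence --- to transfer the three terms to $\RSC_n$. That the Example happens to live in $\SC_n$ is irrelevant, since the Example contributes nothing to the term construction. (Your product trick for infinite $\beta$ would also need the partition hypothesis of Theorem \ref{free}(2) checked for $\RCA_n$, plus an irredundance argument for the image set, whereas the paper's single argument covers all $\beta>0$ uniformly.)
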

\begin{proof} For the first part, we take $n=3$, which is the most difficult case, because the corresponding logic has the least number of 
variables. This part is due to N\;emeti, though to the best of our knowledge it was not published in this form, which is also due to Nemeti in a preprint of 
his. Let $\L$ be a language with $3$ variables, and one tenary relation. The formulas that we will construct will be 
restricted meanning that 
variables occur only in their natural order. 
We shall construct three restricted formulas 
$\phi$, $\psi$ and $\eta$ such that
$\models R(x,y,z)\longleftrightarrow \psi(R/\phi),$
$\models \eta(R/\phi)\text { but not }\models \eta.$
A restricted formula is one such that variables in its atomic subformulas occur only in their natural order.
%Restricted formulas coorespond naturally to $\CA$ terms.
Here $\psi(R/\phi)$ is the formula obtained from $\psi$ by replacing all occurances of $R$ with $\phi$
and $x,y,z$ are the variables $v_0, v_1, v_2$ of $\Lambda$ respectively. 
In the following we write $R$ instead of $R(x,y,z).$
We may write $Rxy$ for $R(x,y)$.
Let $$suc(x,y)=\forall z([Rzy\land z\neq y]\longleftrightarrow [Rxz\lor z=x])$$
If we look at $R$ as a binary relation symbol interpreted as an order, then $suc(x,y)$ says that $y$ is the element after $x$. 
$A$ is the following set of formulas 
$$[R\longleftrightarrow \exists zR, Rxy\land Ryx\longrightarrow x=y, x\neq y\longrightarrow (R(xy)\lor R(yz)),$$ 
$$(\forall x)(\exists y)suc(x,y), \exists y(Ryy\land \forall x[Rxx\longrightarrow Rxy])].$$
$$Ax=(\forall x y z)\bigwedge A$$
Now $Ax$ says that $R$ is binary, and is a discrete ordering without endpoints and has a greatest fixed point.  
Call such a relation good.
Let 
$$\phi=R\lor(Ax\land x=y\land \exists z[suc(x,z)\land Rzz\land (\forall x)(Rxx\longrightarrow Rxz)].$$
Now $\phi$ says that if $R$ is good then $\phi$ represents $\bar{R}$ where $\bar{R}$ is
$$R\cup \{ \text{ the successor of the greatest fixed point of $R$ as a new fixed point }\},$$ 
otherwise $\phi$ is $R$.
$$\psi=(\neg Ax\land R)\lor (Ax\land R\land [x=y\longrightarrow \exists y(x\neq y\land Rxy\land Ryy)]),$$
$\psi$ is $R$ without the greatest fixed point if $R$ is good, otherwise it is $R$.
$$\eta=Ax\longrightarrow \exists xy(x\neq y\land Rxx\land Ryy).$$
$\eta$ says that if $R$ is good then it has at least two fixed points.
Then  $\psi(R/\phi)$ is equivalent to $R$ since $R$ can be recovered from $\bar{R}$ by omitting its greatest fixed point.
$\eta(R/\phi)$ is true since if $R$ is good then $\bar{R}$ has two fixed points. Clearly for every infinite set $M$ there is a model
$\M$ with universe $M$ such that not $\M\models \eta.$
Then $\phi$, $\psi$ and $\eta$ are as required. 
By using the correspondance between terms and restricted formulas we obtain three terms $\tau(x)$, $\sigma(x)$
and $\delta (x)$ such that
$\RCA_3\models \sigma(\tau(x))=x$ and $\RCA_3\models \delta(\tau(x))=1$ but not $\Cs_3\models \delta(x)=1$.
Then for every $n\geq 3$ we have 
(a) $\RCA_n\models \sigma(\tau(x))=x$ 
and 
(b) $\RCA_n\models \delta(\tau(x))=1$ but not $\Cs_n\models \delta(x)=1.$
Let $0<\beta$, and $n\geq 3$ and let $\{g_i:i<\beta\}$ be an arbitray generator set of $\Fr_{\beta}\RCA_n.$
Then $\{\tau(g_0)\}\cup \{g_i: 0<i<\beta\}$ generates $\Fr_{\beta}\RCA_n$ by (a) but not freely by (b).

For the second part, we add a binary relation to our language and we pretend that it the membership relation in set theory; 
in fact will be the real membership relation
when semantically  interpreted, which will be the case. 

The idea is to translate any formula with equality to one having an extra binary relation, that acts as equality
such that the two are equivalent modulo a certain strong 
congruence, and the second is equality free using the existentional axiom of set theory.

The proof is purely semantical, which makes life easier. However, there is a syntactical proof too, 
using the pairing technique of Tarski substantially modified by N\'emeti, giving the
same result for $SC_3$, but we omit this much more involved proof. This pairing technique, implemented via a recursive translation
function for $L_{\omega,\omega}$ to $L_3$ preserves meaning, hence providing a completeness theorem for $\CA_3$. (Larger $n$ is much easier,
below we will deal with paring function in dimension $4$ a technique invented by Tarski.) 
Such a procedure enables one to transfer results proved for the representable algebras to the abstract ones, 
though the distance between them is infinite, in some precise sense (a result of Monk).

Let $\Ax_{eq}$ and $\Ax_{cong}$ be as in \cite{a} and $\tr$ be the function that takes every formula to an equality free formula.
The latter is an adjoint function, and it clearly preserves meaning

For $\{x,y,z\}=\{v_0, v_1, v_2\}$, these are defined as follows:
$$\Ax_{eq}=\{\forall x\forall y(x=y\leftrightarrow (\forall z(z\in x\leftrightarrow z\in y))\}$$ 
$$\Ax_{cong}= \{\forall xy(\forall z(z\in x\leftrightarrow z\in y))\to (\forall z(x\in z\leftrightarrow y\in z))\}$$
For a formula $\phi$ with equality, $\tr(\phi)$ is obtained from $\phi$ by replacing all of the occurrences of $x=y$ by 
$\forall z(z\in x\leftrightarrow z\in y).$
Notice that such formulas can be defined by algebraic terms, the former in cylindric algebras and the second in Pinters algebras.
Then, we have  
$\Ax_{eq}\vdash \phi\leftrightarrow \tr(\phi).$

For $\M$ a model  for a language without equality, define the Leibniz congruence $\sim $ by 
$$a\sim b\longleftrightarrow \forall z(z\in a\leftrightarrow z\in b).$$
It is not hard to check that $\sim$ is a {\it strong} congruence; 
it preserves $\in$ in both directions.
 
Then for any formula with equality using 3 variables, and $\M$ a model without equatity of $\Ax_{cong}$, we have
$\M\models \tr(\phi)$ iff $\M/\sim \models \phi$. Notice that $\phi$ has equality, we have $a/\sim=b/\sim$ iff $a\sim b,$
so that is is meaningful to talk about equality here. 

Let $\psi$ be any formula with equality and let $\tr(\phi)$ be the equality free corresponding formula, using the membership binary relation.
A piece of notation: If $\M$ is a model for the language with equality, let $\A_M\in \Cs_n$ be the correponding set algebra, 
and same for models without equality; in this last case, we denote the corresponding Pinter's set algebra corresponding to 
$\N$ by $\B_N$. 

Let $\M$ be a model for the language with equality such that  $\M\models \Ax_{eq}$, then there exists 
$\N$ a model for the language without equality
such that $\M\cong \N/\sim$. Then there is an  an  induced base isomorphism betwen $\B_N\to \Rd_{sc}\A_M.$

%as the the base isomorphism induced by the isomorphism betwen $\M$ and $\N$,  
Now we use the correspondence betwen formulas and terms, we lift the translation function to the level of terms.
If $\tau$  corresponds to $\psi$ then let
$\tr(\tau)$ be that corresponding to $\tr(\phi)$.
Frm the above we have 
and $\RCA_{\alpha}\models \tau =1$ iff $\RSC_{\alpha}\models tr(\tau)=1$.
and we done, from the first part of the  proof.
\end{proof}

\begin{theorem}
There is a formula $\psi\in L_4$ such that no consistent 
recursive extension $T$ of $\psi$ is complete, and moreover, $\psi$ is hereditory inseparable meaning that  
no recursive extension of $\psi$ separates the $\vdash$ consequences
of $\psi$ from the $\psi$ refutable sentences.
\end{theorem}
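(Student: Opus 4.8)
The plan is to obtain $\psi$ as the $L_4$-translation of a finite axiomatization of a first-order theory that is known, on purely recursion-theoretic grounds, to be \emph{recursively inseparable}. A convenient choice is a finitely axiomatizable essentially undecidable theory --- Robinson's arithmetic $Q$, or (matching the set-theoretic flavour of the preceding proof, with $\in$ as primitive) a suitable finite fragment of set theory strong enough to represent every recursive function. For any such theory the pair consisting of its provable sentences and its refutable sentences is a recursively inseparable pair: this is the Church--Rosser strengthening of G\"odel's theorem, as developed in Tarski--Mostowski--Robinson. The entire task is therefore to transport this property faithfully into the four-variable fragment $L_4$, and here the bound $n=4$ is chosen precisely because, as remarked above, pairing is comfortable in this dimension (``larger $n$ is much easier'' than the delicate $n=3$ case handled earlier).

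First I would set up Tarski's pairing technique in dimension $4$. The key point is that a definable pairing function lets one code an ordered pair of individuals by a single individual, so that a formula which naively needs many distinct variables to speak about tuples can be rewritten with only four. Concretely one fixes formulas, writable in $L_4$, expressing that an element codes the pair of two given elements together with the two projection relations, and checks that the pairing axioms asserting these behave correctly are themselves in $L_4$. One then defines a \emph{recursive} translation $\chi\mapsto\chi^{\ast}$ that replaces quantification over tuples by quantification over their codes, keeping the variable count bounded by four throughout. Let $\psi$ be the conjunction of the (finitely many) translated axioms of the chosen base theory together with the pairing axioms. Since the pairing apparatus is satisfiable and the base theory is consistent, $\psi$ is consistent; in particular $\psi/\!\equiv$ is non-zero, linking this back to the incompleteness framework of the first theorem.

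It then remains to transfer inseparability. Because $\chi\mapsto\chi^{\ast}$ is recursive and preserves the consequence relation in both directions modulo $\psi$ --- that is, it is a faithful interpretation in the sense of Tarski--Mostowski--Robinson --- any recursive set separating the $\vdash$-consequences of $\psi$ from the $\psi$-refutable sentences would pull back along $\ast$ to a recursive set separating the theorems of the base theory from its refutable sentences, contradicting the recursive inseparability of the latter; this gives the \emph{moreover} clause. The first assertion is then immediate: a consistent recursive extension $T\supseteq\psi$ that was complete would have a recursive (hence decidable) set of theorems, and since $\mathrm{Th}(T)$ contains every $\vdash$-consequence of $\psi$ and, by consistency, excludes every $\psi$-refutable sentence, that set would separate the two, which is impossible. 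The genuine obstacle is the bookkeeping in the pairing step: arranging the pairing and projection formulas so that the \emph{entire} recursive translation never exceeds four variables, and verifying that it is a faithful consequence-preserving interpretation, is exactly the technical core that the choice $n=4$ is designed to make feasible.
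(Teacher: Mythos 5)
Your proposal is correct in substance and shares the paper's skeleton: both start from a finitely axiomatized, recursively inseparable base theory (Robinson's arithmetic, interpreted set-theoretically over the hereditarily finite sets so that the only primitive is the membership relation), both rely on Tarski's quasiprojections $p_0,p_1$ to code tuples, and both pull inseparability back along a recursive, negation-commuting translation, deriving the incompleteness clause from inseparability exactly as you do. Where you genuinely diverge is in how the translation is forced into four variables. You propose to do the variable bookkeeping directly inside $L_4$ and flag this as the technical core; the paper instead sidesteps that bookkeeping by an algebraic detour through relation algebras: the inseparable sentence $\lambda$ is translated into $L_3(E,2)$ by Tarski's translation $\tr$, then mapped by a meaning-preserving recursive map $r$ into the relation-algebra terms ${\sf RT}$, conjoined with the term $\pi_{\RA}$ expressing the quasiprojection axioms, and finally pushed into $L_4$ by the homomorphism $h$ from the free $\RA$ on one generator into $\Ra\Fm_4$ sending the generator to $x\in y$ --- the point being that the $\Ra$ reduct of a $\CA_4$ is a relation algebra, so equational $\RA$ reasoning is automatically simulated by $\vdash_4$ and no variable counting is ever done by hand. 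What your route buys is conceptual directness, avoiding the $\CA\to\RA\to\CA$ round trip; indeed the paper itself remarks, immediately after its proof, that this detour can be avoided using a result of Simon and a result of N\'emeti. What the paper's route buys is that your hardest step comes for free from the algebra. Two smaller points: your appeal to faithfulness (preservation of consequence in \emph{both} directions) is more than the argument needs --- forward preservation of provability, commutation with negation (item (ii) of the paper's Fact), and consistency of $\psi$ already make the recursion-theoretic pullback go through; and if you do pursue the direct $L_4$ translation, that bounded-variable translation lemma is a substantial theorem in its own right rather than bookkeeping, so as written your proposal has delegated, not proved, its central step.
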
 
\begin{proof} We assume that we have only one binary relation and we denote our language by $L_4(E,2)$. 
This is implicit in the Tarski Givant approach, 
when they interpreted $ZF$ in $\RA$. $L_4$ is very close to $\RA$ but not quite
$\RA$, it s a little bit stronger 9for eaxmple there are four variable terms that cnnot be expressed in $\RA$ terms). 
The technique is called the {\it pairing} technique, which uses quasi projections to code extra variable, 
establishing the completeness theorem
above for $\vdash_4$. 

We have one binary relation $E$ in our language; for convenience, 
we write $x\in y$ instead of $E(x,y)$, to remind ourselves that we are actually working in the language
of set theory. 
We define certain formulas culminating in formulating the axioms of a finite undecidable theory, better known as Robinson's arithmetic 
in our language. These formulas are taken from N\'emeti.
%We need to define, the quasi projections. Quoting Andr\'eka and N\'emeti, we do this by 'brute force'.
We  formulate the desired hereditory inseparable $\psi$ in $L_4(E,2)$.

%Havind defined projectionsabove, we define a formula $\pi$ as follows
For $4$ variables, we need the following 'translation' result of Tarski which states a basic 
property of Tarski's pairing functions, namely we can code up, 
or represent, any sequence of variables in terms of a single variable, thus effectively 
reducing the number of variables to one. In more detail, we have:

\begin{athm}{Fact}  Let $p_0(x,y)$ and $p_1(x,y)$ be in $L_3(E,2)$ and let
$$\pi=(\forall x)(\forall y)(\forall z)[(p_0(x,y)\land p_0(x,z)\implies y=z)\land$$ 
$$p_1(x,y)\land p_1(x,z)\implies y=z)\land$$
$$\exists z(p_0(z,x)\land p_1(z,y)].$$ 

be the formula stipulating that they are quasiprojections. 
Then there is a recursive function $\tr:L_{\omega}(E,2)\to L_3$
such that $(i)-(iii)$ below hold for every $\phi\in L_3(Em2)$
\begin{enumroman}
\item $\pi\models \phi\longleftrightarrow \tr\phi$
\item $\tr(\neg \phi)=\neg  \tr(\phi),$  
%\item  
%$f\phi\in Fm_{3}^j \text { whenever } \phi\in Fm_{\omega}^j \text { for every $j\leq 2$}.$
\end{enumroman}
\end{athm} 

%Note that writing $\pi$ without the substitution convention, i.e. writing $\pi$ 
%as a restricted formula  would be much longer.
First  we interpret usual Robinson arithmetic in the usual language with $\omega$ many variables, 
using the standard interpretation of Peano arithmetic into set theory relativized to finite hereditory 
sets (that is Peano arithmetic with axiom of infinity). 
This part is semantical, in nature, so it is not too difficult to implement:
$$x=\{y\}=:y\in x\land (\forall z)(z\in x\implies z=y)$$
$$\{x\}\in y=:\exists z(z=\{x\}\land z\in y)$$
$$x=\{\{y\}\}=:\exists z(z=\{y\}\land x=\{z\})$$
$$x\in \cup y:=\exists z(x\in z\land z\in y)$$
$$pair(x)=:\exists y[\{y\}\in x\land (\forall z)(\{z\}\in x\to z=y)]\land \forall zy[(z
\in \cup x\land \{z\}\notin x\land$$
$$y\in \cup x\land \{y\}\notin x\to z=y]\land \forall z\in x\exists y
(y\in z).$$
Now we define the pairing functions:
$$p_0(x,y)=:pair(x)\land \{y\}\in x$$
$$p_1(x,y)=:pair(x)\land [x=\{\{y\}\}\lor (\{y\}\notin x\land y\in \cup x)].$$
$p_0(x,y)$ and $p_1(x,y)$ are defined.

$$x\in Ord= : \text {`` $x$ is an ordinal, i.e. $x$ is transitive and $\in$ is a total ordering on $x$},$$
$$x\in Ford=:x\in Ord\land \text { ``every element of $x$ is a successor ordinal "}$$
$$\text { i.e. $x$ is a finite ordinal }.$$
$$x=0=: ``x\text { has no element }"$$
$$sx=z=:z=x\cup \{x\},$$
$$x\leq y=:x\subseteq y,$$
$$x<y=: x\leq y\land x\neq y,$$
$$x+y=z=:\exists v(z=x\cup v\land x\cap v=0 \land$$
$$\text {``there exists a bijection between $v$ and $y$"})$$
$$x\cdot y=z=:\text { ``there is a bijection between $z$ and $x\times y$}"$$
$$x\underline{exp} y=z: \text { there is a bijection between 
$z$ and the set of all functions from $y$ to $x$}"$$
Now $\lambda$' is the formula saying that:
$0, s, +, \cdot, \underline{exp}$ are functions of arities $0,1,2,2,2$ on $Ford$
and 
$$(\forall xy\in Ford)[sx\neq 0\land sx=sy\to x=y)\land (x<sy\longleftrightarrow x\leq y)
\land$$
$$\neg(x<0)\land (x<y\lor x=y\lor y<x)\land (x+0=x)\land (x+sy=s(x+y))\land (x.0=0)$$
$$\land (x\cdot sy=x\cdot y+x)\land (x\underline{exp} 0=s0)\land (x\underline{exp} sy=x\underline{exp} y\cdot x)].$$
Now the existence of the desired incompletable $\lambda$ readily follows:
$\lambda\in Fm_{\omega}^0$.
Let ${\sf RT}$ be the absolutely free relation algebra on one genetrator. Let $p=r(p_0(x,y))$ and $q=r(p_1(x,y))$, 
where $r$ is the recursive function mapping $L_3(E,2)$ into ${\sf RT}$ that also preserves meaning.
%Now let To get parng functions $p$ and $q$ and to translate all first order formulas to the $QRA$ fargment of $L_4$. $p$ $q$ are obtained by semantical means, this is not so hard.
%We need:
%Ax semantically equivalent to $\pi$ buta ctually stronger proof theoretically.formulas 
Here ${\sf RT}$ is the set terms in the language of relation algebras with only one generator.
$$\pi_{\RA}=(\breve{p};p\to Id)\cdot (\breve{q};q\to Id)\cdot (\breve{p};q).$$
This is just stipluation that $p$ and $r$ are quasi projections, in the language of relation algebras. 
Then, we have  $\pi_{\RA}\in {\sf RT}$ since $p_i(x,y)\in L_3(E,2)$

Let $\lambda\in L_{\omega}$ be the inseparable sentence, that is the conjuction of finite axioms of Robinson arithmetic)
constructed above and let
$\psi=(r(\tr(\lambda))\cdot \pi_{\RA}$. 
%${\Ra}{\cal D}$ has universe $D$ and the boolean operations are the same as those of
%$\cal D$, composition $;$ and converse $\breve{}$ are defined for $a,b\in D$ by \cite{HMT1} 5.3.7:
%$$a;b={\sf c}_3({\sf s}_2^1a\cdot {\sf s}_2^0b)\text { and } \breve{a}={}_2{\sf s}(0,1)a.$$ 
%Let $$\eta=[rf(\lambda)]\cdot \pi_{RA}.$$ 
%Recall that $\lambda$ is an  inseparable sentence such that $\lambda\land \pi'$
%has a model. 
From the definition of $r$ and $\tr$ we have 
$\eta\in {\sf RT}$. Let $\Fm_4$ be the formula algebra built on $L_4(E,2)$.
Let $\cal G$ be the absolutely free $\RA$ algebra on one generator $g$.
Let $h: {\cal G} \to \Ra\Fm_4$ be the 
homomorphism that takes the free generator of $\cal G$ to $x\in y.$
Let $\psi=h(\eta)$. Then $\psi\in \Fm_4$ and furthermore, it can be checked that $\psi$ is the desired formula. 
(Here we use that the $\Ra$ reduct of a $\CA_4$ is a relation algebra.)
\end{proof}

The same idea can be implemented by avoiding the path from $\CA$ to $\RA$ and then back to $\CA$, using Simon's result, 
and a very deep result of N\'emeti's.

N\'emeti defines a set of axioms ${\sf Ax}$ that is semantically equivalent to $\pi$ but stronger (proof theoretically).
The idea to translate all $3$ variable usual first order formula into the $\QRA$ fragment of $L_3(E,2)$. 
We have the quasiprojections $p_0$, 
$p_1$ and the set of axioms ${\sf Ax}$; which say that $p_0$ and $p_1$ are quasiprojections, and it can 
prove a strong form of associativity of relations.

We also know that in very $\QRA$, for each $n\in \omega$, there sits in a $\CA_n$, 
and there are cylindric algebras of various increasing finite dimensions
synchronized by the neat reduct functor, so that
the $\CA_3$ sitting there, has the cylindric neat embedding property 
(it neatly embeds, and indeed faithfully so in cylindric algebras of arbitary larger finite 
dimensions theorem). Yet again, by Henkin's neat embedding thoerem,  this algebra call it $\C$ is representable.

Define $f: \Fm_3\to \C$ be the homomorphism defined the usual way. 
Then define the translation map as follows:
$\tr(\phi)={\sf Ax}\to f(\phi)$. This functions covers the infinite gap between $\vdash_3$ and $\models.$
The above proof for $3$ dimensions can be done by $\pi$ instead of ${\sf Ax}$.

For cylindric algebras, diagonal free cylindric algebras Pinter's algebras and quasipolyadic equality, though free algebras of $>2$ dimensions
contain infinitely many atoms, they are not atomic. 
(The diagonal free case of cylindric algebras is a very recent result, due to Andr\'eka and N\'emeti, that has profound
repercussions on the foundation of mathematics.)
We, next, state two theorems that hold for such algebras, in the general context of $BAO$'s. But first a definition.

\begin{definition} Let $K$ be a class of $BAO$ with operators $(f_i: i\in I)$
Let $\A\in K$. An element $b\in A$ is called {\it hereditary closed} if for all $x\leq b$, $f_i(x)=x$.
\end{definition}
In the presence of diagonal elements $d_{ij}$ and cylindrifications $c_i$ for indices $<2$, $-c_0-d_{01},$ 
is hereditory closed. 
\begin{theorem}
\begin{enumarab}
\item Let $\A=\Sg X$ and $|X|<\omega$. Let $b\in \A$ be hereditary closed. Then $\At\A\cap \Rl_{b}\A\leq 2^{n}$.
If $\A$ is freely generated by $X$, then $\At\A\cap \Rl_{b}\A= 2^{n}.$
\item If every atom of $\A$ is below $b,$ then $\A\cong \Rl_{b}\A\times \Rl_{-b}\A$, and $|\Rl_{b}\A|=2^{2^n}$.
If in addition $\A$ is infinite, then $\Rl_{-b}\A$ is atomless.
\end{enumarab}
\end{theorem}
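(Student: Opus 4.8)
The plan is to show that the Boolean relativization $\rho_b\colon\A\to\Rl_{b}\A$, $a\mapsto a\cdot b$, is not merely a Boolean surjection but a surjective homomorphism of $K$-algebras, the operators on $\Rl_{b}\A$ all being the identity. Everything is driven by the identity
\[ b\cdot f_i(a)=b\cdot a\qquad(a\in\A,\ i\in I)\qquad(\ast). \]
Granting $(\ast)$, first $f_i^{\Rl_{b}\A}(x)=f_i(x)\cdot b=b\cdot x=x$ for $x\le b$, so the operators of $\Rl_{b}\A$ are trivial; and $\rho_b(f_i(a))=f_i(a)\cdot b=b\cdot a=\rho_b(a)=f_i^{\Rl_{b}\A}(\rho_b(a))$, so $\rho_b$ is a $K$-homomorphism. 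Since $\A=\Sg X$, surjectivity gives $\Rl_{b}\A=\Sg^{\Rl_{b}\A}\{g\cdot b:g\in X\}$, and as the operators there are trivial this is just the Boolean subalgebra generated by the $n=|X|$ elements $g\cdot b$. A Boolean algebra generated by $n$ elements has at most $2^{n}$ atoms, and the atoms of $\Rl_{b}\A$ are exactly the atoms of $\A$ lying below $b$; hence $\At\A\cap\Rl_{b}\A\le 2^{n}$.

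The main obstacle is proving $(\ast)$. Writing $f_i(a)=f_i(a\cdot b)+f_i(a\cdot(-b))$ and using hereditary closedness on the first summand ($a\cdot b\le b$, so $f_i(a\cdot b)=a\cdot b$) reduces $(\ast)$ to $b\cdot f_i(a\cdot(-b))=0$. This is precisely where additivity alone does not suffice: I would invoke that the operators are self-conjugate, $u\cdot f_i(v)=0\iff f_i(u)\cdot v=0$, as the cylindrifications are (the remaining operators of the listed examples being conjugated with $b$ fixed also by their conjugates). Since $b\le b$ is hereditary closed, $f_i(b)=b$, whence $b\cdot f_i(a\cdot(-b))=0\iff f_i(b)\cdot(a\cdot(-b))=0\iff b\cdot a\cdot(-b)=0$, the last being trivial. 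This conjugacy step is the crux; without it the operators could push mass from below $-b$ up into $b$ and the bound would fail.

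For the equality in the free case I would show the $n$ images $g\cdot b$ are Boolean-independent, so that $\Rl_{b}\A$ is the free Boolean algebra on $n$ generators, with exactly $2^{n}$ atoms and $2^{2^{n}}$ elements. Independence amounts to: every cell $\prod_{g\in Y}(g\cdot b)\cdot\prod_{g\in X\smallsetminus Y}(-g\cdot b)$ is nonzero. Using that $\A$ is freely generated by $X$, for each $Y\subseteq X$ I would pick an algebra in the variety and a free assignment of the generators witnessing a point that lies in the image of $b$ with exactly the sign pattern $Y$, as the set-algebra models of the listed examples always supply. This yields $\At\A\cap\Rl_{b}\A=2^{n}$.

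For part~(2), the same computation shows $\rho_{-b}\colon a\mapsto a\cdot(-b)$ is also a $K$-homomorphism: here $\rho_{-b}(f_i(a))=f_i(a)\cdot(-b)=f_i(a\cdot(-b))\cdot(-b)$, since $f_i(a\cdot b)=a\cdot b\le b$ kills the other summand, and this equals $f_i^{\Rl_{-b}\A}(\rho_{-b}(a))$. Hence $a\mapsto(a\cdot b,\,a\cdot(-b))$ is a $K$-isomorphism $\A\cong\Rl_{b}\A\times\Rl_{-b}\A$. The value $|\Rl_{b}\A|=2^{2^{n}}$ is the free-case computation just above. Finally, the atoms of $\Rl_{-b}\A$ are the atoms of $\A$ lying below $-b$, of which there are none by hypothesis, so $\Rl_{-b}\A$ is atomless; and if $\A$ is infinite then, $\Rl_{b}\A$ being finite, $\Rl_{-b}\A$ is an infinite atomless factor, which is exactly where the non-atomicity of $\A$ resides.
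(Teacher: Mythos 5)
Your proposal is correct (for conjugated operators) and follows the same skeleton as the paper's proof: hereditary closedness trivializes the relativized operators, so $\Rl_{b}\A$ is a Boolean algebra on at most $n$ generators and has at most $2^{n}$ atoms, which are exactly the atoms of $\A$ below $b$; freeness plus witnessing homomorphisms give the exact count; and the pair map gives the product decomposition. Where you genuinely differ is at the crux: the paper simply asserts $\Rl_{b}\A=\Sg^{\Rl_b\A}\{x_i\cdot b\}$ ``since $b$ is hereditary fixed,'' but hereditary closedness only makes the relativized operators the identity --- it does not make $a\mapsto a\cdot b$ a $K$-homomorphism, which is precisely your identity $(\ast)$. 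Your diagnosis that additivity alone fails is right: on the Boolean algebra with atoms $a_1,a_2,a_3,c$, put $f(a_i)=a_i$ and $f(c)=a_1+a_2$; then $b=a_1+a_2+a_3$ is hereditary closed, yet $\A$ is generated by the single element $a_1+c$ and has three atoms below $b$, beating the bound $2^1$ --- and $f$ is not self-conjugate. So your conjugacy lemma is a necessary repair, satisfied in all the paper's applications (cylindrifications are self-conjugate, substitutions conjugated in pairs). The same identity $(\ast)$ also underwrites the paper's part (2), where quotienting by $\Ig\{-b\}$ silently needs $f_i(-b)\leq -b$; your direct verification that $a\mapsto(a\cdot b,\,a\cdot(-b))$ is a $K$-isomorphism is cleaner than the ideal-theoretic route. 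One caveat you share with the paper: the exact count $2^{n}$ requires the witnessing homomorphisms to send $b$ to $1$ (it fails for $b=0$, which is vacuously hereditary closed); for the intended $b=-\cyl{0}-\diag{0}{1}$ this is automatic, since that constant term evaluates to $1$ in the two-element algebra, but your ``point lying in the image of $b$'' should be checked rather than assumed.
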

\begin{proof} Assume that $|X|=m$. We have $|\At\A\cap \Rl_b\A|=|\{\prod Y \sim \sum(X\sim Y).b\}\sim \{0\}|\leq {}^{m}2.$
Let $\B=\Rl_b\A$. Then $\B=\Sg^{\B}\{x_i.b: i<m\}=\Sg^{Bl\B}\{x_i.b:i<\beta\}$ since $b$ is hereditary fixed.
For $\Gamma\subseteq m$, let
$$x_{\Gamma}=\prod_{i\in \Gamma}(x_i.b).\prod_{i\in m\sim \Gamma}(x_i.-b).$$
Let $\C$ be the two element algebra. Then for each $\Gamma\subseteq m$, there is a homomorphism $f:\A\to \C$ such that
$fx_i=1$ iff $i\in \Gamma$.This shows that $x_{\Gamma}\neq 0$ for every $\Gamma\subseteq m$,
while it is easily seen that $x_{\Gamma}$ and $x_{\Delta}$
are distinct for distinct $\Gamma, \Delta\subseteq m$. We show that $\A\cong \Rl_{b}\A\times \Rl_{-b}\A$.

Let $\B_0=\Rl_{b}\A$ and $\B_1=\Rl_{-b}\A$.
Let $t_0=b$ and $t_1=-b$. Let  $J_i=\Ig\{t_i\}$
Assume that $z\in J_0\cap J_1$. Then $z\leq t_i$,
for $i=0, 1$, and so  $z=0$. Thus $J_0\cap J_1=\{0\}$. Let $y\in A\times A$,
and let $z=(y_0.t_0+y_1.t_1)$, then $y_i.x_i=z.x_i$ for each $i=\{0,1\}$
and so $z\in \bigcap y_0/J_0\cap y_1/J_1$. Thus $\A/J_i\cong \B_i$, and so
$\A\cong \B_0\times \B_1$.

\end{proof}
The above theorem holds for free cylindric and quasi-polyadic equality algebras. The second part  (all atoms are zero-dimensional) 
is proved by Mad\'arasz and N\'emeti.

The following theorem holds for any class of $BAO$'s.
\begin{theorem}\label{atomless}
The free algebra on an infinite generating set is atomless.
\end{theorem}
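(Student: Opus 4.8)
The plan is to show that for an infinite generating set $X$, every nonzero element $a$ of the free algebra $\A = \Fr_X K$ lies above a strictly smaller nonzero element, so that no atom can sit below $a$. The crucial observation is that any fixed element $a$ is built from a \emph{finite} subset of the generators: since $\A = \Sg X$, there is a finite $X_0 \subseteq X$ with $a \in \Sg X_0$. Because $X$ is infinite, there is at least one generator $g \in X \setminus X_0$ that does not appear in the term defining $a$. The idea is to use this ``unused'' generator to split $a$ into two nonzero pieces.

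First I would make precise the splitting. Consider the two elements $a \cdot g$ and $a \cdot (-g)$; their Boolean sum is $a$, so it suffices to prove that both are nonzero, as that forces $a \cdot g$ to be a nonzero element strictly below $a$. To see each piece is nonzero I would use the freeness of $\A$ together with the existence of a suitable algebra in $K$ into which the relevant generators can be mapped independently. Concretely, by the universal mapping property there is a homomorphism $f \colon \A \to \B$ for some $\B \in K$ sending the generators in $X_0$ to a configuration witnessing $f(a) \neq 0$ (such a $\B$ and $f$ exist precisely because $a \neq 0$, as $a=0$ would be forced in every $K$-algebra otherwise), and since $g \notin X_0$ we are free to choose the value of $f(g)$ independently of the values on $X_0$. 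Picking $f(g) = 1$ gives $f(a \cdot g) = f(a) \neq 0$, while picking a second homomorphism agreeing with $f$ on $X_0$ but sending $g \mapsto 0$ gives $f'(a \cdot (-g)) = f'(a) = f(a) \neq 0$. Hence both $a\cdot g$ and $a\cdot(-g)$ are nonzero.

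From this the conclusion is immediate: $a \cdot g$ is nonzero and $a \cdot g \leq a$, and $a \cdot g \neq a$ because $a \cdot (-g) \neq 0$. So $a$ is not an atom, and moreover the same argument applied to $a \cdot g$ (which again uses only finitely many generators) shows there is no atom anywhere below $a$. Since $a$ was an arbitrary nonzero element, $\A$ is atomless.

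The main obstacle is justifying that the value of the homomorphism on the single generator $g$ can genuinely be chosen independently of its values on $X_0$ while keeping $f(a)$ nonzero; this is exactly where the independence built into free generation is used, and one must take care that the target algebra $\B$ has enough elements (at least a two-element Boolean structure) to realize both $f(g)=1$ and $f(g)=0$ without disturbing the witness for $f(a) \neq 0$. In the general $\mathit{BAO}$ setting this is guaranteed by the universal mapping property of $\Fr_X K$ applied to the free extension of the finitely generated piece, so the argument goes through for \emph{any} variety $K$, which is precisely the generality claimed in the statement.
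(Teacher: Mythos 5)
Your proof is correct and takes essentially the same route as the paper's: choose a generator $g$ (the paper's $y$) outside the finite support of a nonzero $a$, and use freeness to obtain homomorphisms agreeing on that support but sending $g$ to $1$ and to $0$, whence $a\cdot g$ and $a\cdot(-g)$ are both nonzero and $a$ is not an atom. The paper merely streamlines the witnessing step by taking the target algebra to be $\A$ itself, so that $f(a)=h(a)=a$; your more general $\B$ with $f(a)\neq 0$ is the same argument in substance.
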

\begin{proof} Let $X$ be the infinite freely generating set. Let $a\in A$ be non-zero. 
Then there is a finite set $Y\subseteq X$ such that $a\in \Sg^{\A} Y$. Let $y\in X\sim Y$.
Then by freeness, there exist homomorphisms $f:\A\to \B$ and $h:\A\to \B$ such that $f(\mu)=h(\mu) $ for all $\mu\in Y$ while
$f(y)=1$ and $h(y)=0$. Then $f(a)=h(a)=a$. Hence $f(a.y)=h(a.-y)=a\neq 0$ and so $a.y\neq 0$ and
$a.-y\neq 0$.
Thus $a$ cannot be an atom.
\end{proof}

\end{document}